\numberwithin{equation}{section}
\numberwithin{figure}{section}
\theoremstyle{plain}
\newtheorem{thm}{\protect\theoremname}
\theoremstyle{plain}
\newtheorem{cor}[thm]{\protect\corollaryname}
\theoremstyle{remark}
\newtheorem{rem}[thm]{\protect\remarkname}
\theoremstyle{plain}
\newtheorem{lem}[thm]{\protect\lemmaname}
\providecommand{\corollaryname}{Corollary}
\providecommand{\lemmaname}{Lemma}
\providecommand{\remarkname}{Remark}
\providecommand{\theoremname}{Theorem}
\begin{document}
\author{Minoru Hirose} \address[Minoru Hirose]{Institute for Advanced Research, Nagoya University,  Furo-cho, Chikusa-ku, Nagoya, 464-8602, Japan} \email{minoru.hirose@math.nagoya-u.ac.jp}
\author{Hideki Murahara} \address[Hideki Murahara]{The University of Kitakyushu,  4-2-1 Kitagata, Kokuraminami-ku, Kitakyushu, Fukuoka, 802-8577, Japan} \email{hmurahara@mathformula.page}
\author{Tomokazu Onozuka} \address[Tomokazu Onozuka]{Institute of Mathematics for Industry, Kyushu University 744, Motooka, Nishi-ku, Fukuoka, 819-0395, Japan} \email{t-onozuka@imi.kyushu-u.ac.jp}
\title{On the asymptotic behavior of the double zeta function for large negative
indices}
\begin{abstract}
In this paper, we investigate an asymptotic behavior of the double
zeta function of Euler-Zagier type for indices with large negative
real parts.
\end{abstract}

\maketitle

\section{Introduction}

The double zeta function is a meromorphic function of two variables
defined by the analytic continuation of
\[
\zeta(s_{1},s_{2})\coloneqq\sum_{0<m_{1}<m_{2}}\frac{1}{m_{1}^{s_{1}}m_{2}^{s_{2}}}
\]
(\cite{Zhao_anacon}, \cite{AET_anacon}). The purpose of this paper
is to investigate the asymptotic behavior of $\zeta(s_{1},s_{2})$
when the real parts of $s_{1}$ and $s_{2}$ are large negative. Recall
that $\frac{\zeta(s)}{f(s)}$ with $f(s)\coloneqq2^{s}\pi^{s-1}\Gamma(1-s)$
is approximated by $\sin(\pi s/2)$ when $s\to-\infty$ since $\frac{\zeta(s)}{f(s)}=\sin(\pi s/2)\zeta(1-s)$.
In this paper, we show that $\frac{\zeta(s_{1},s_{2})}{f(s_{1}+s_{2})}$
is approximated by
\[
-\frac{1}{2}\sin\left(\frac{\pi}{2}(s_{1}+s_{2})\right)+\frac{1}{2}\cos\left(\frac{\pi}{2}(s_{1}+s_{2})\right)\cot\left(\frac{\pi s_{2}}{s_{1}+s_{2}}\right).
\]
More precisely, we prove the following claim. Let $(s)_{j}$ denotes
the Pochhammer symbol, i.e., $(s)_{j}=s(s+1)\cdots(s+j-1)$, $\cot^{(j)}$
denotes the $j$-th derivative of the cotangent function, and ${\rm Coeff}(p(x),x^{j})$
denotes the coefficient of $x^{j}$ in the Taylor expansion of $p(x)$
at $x=0$.
\begin{thm}
\label{thm:intro_main}Fix a real number $\epsilon>0$ and a nonnegative
integer $N$. Let $s_{1}$ and $s_{2}$ be complex variables satisfying
$\left|\Im s_{1}\right|<\frac{1}{\epsilon}$, $\left|\Im s_{2}\right|<\frac{1}{\epsilon}$,
$\epsilon<\frac{\Re s_{1}}{\Re(s_{1}+s_{2})}<1-\epsilon$, and $\left|s_{1}+s_{2}-2k\right|>|s_{1}+s_{2}|^{-1/\epsilon}$
for all $k\in\mathbb{Z}$. Then, as $\Re(s_{1}),\Re(s_{2})\to-\infty$,
we have
\[
\frac{\zeta(s_{1},s_{2})}{f(s_{1}+s_{2})}=-\frac{1}{2}\sin\left(\frac{\pi}{2}(s_{1}+s_{2})\right)+\frac{1}{2}\cos\left(\frac{\pi}{2}(s_{1}+s_{2})\right)\sum_{j=0}^{2N}\frac{\pi^{j}\cot^{(j)}(\pi r_{2})}{(s_{1}+s_{2})_{j}}c_{j}+O\left(\left|s_{1}+s_{2}\right|^{-N-1}\right),
\]
where $r_{j}\coloneqq\frac{s_{j}}{s_{1}+s_{2}}\quad(j=1,2)$ and
\begin{align*}
c_{j} & \coloneqq{\rm Coeff}((1+xr_{2})^{-s_{1}}(1-xr_{1})^{-s_{2}},x^{j})\\
 & =\sum_{2l_{2}+3l_{3}+\cdots+jl_{j}=j}(s_{1}+s_{2})^{l_{2}+\cdots+l_{j}}\prod_{k=2}^{j}\frac{1}{l_{k}!}\left(\frac{r_{1}(-r_{2})^{k}+r_{2}r_{1}^{k}}{k}\right)^{l_{k}}\qquad(j\geq0).
\end{align*}
\end{thm}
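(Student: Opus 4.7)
The strategy is to derive the stated asymptotic by combining a Mellin--Barnes integral representation of $\zeta(s_{1},s_{2})$ with the functional equation of the Riemann zeta function, and then carrying out a saddle-point expansion together with a summation of residues.

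First, start from a Mellin--Barnes formula such as
\[
\zeta(s_{1},s_{2})=\frac{1}{2\pi i\,\Gamma(s_{2})}\int_{(c)}\Gamma(s_{2}+z)\Gamma(-z)\zeta(-z)\zeta(s_{1}+s_{2}+z)\,dz,
\]
which can be obtained by inserting the binomial Mellin--Barnes expansion of $(1+k/m_{1})^{-s_{2}}$ into $\zeta(s_{1},s_{2})=\sum_{m_{1},k\geq1}m_{1}^{-s_{1}-s_{2}}(1+k/m_{1})^{-s_{2}}$, and then extended to the range of interest by shifting the contour. Apply the Riemann functional equation
\[
\zeta(s_{1}+s_{2}+z)=f(s_{1}+s_{2}+z)\sin(\pi(s_{1}+s_{2}+z)/2)\zeta(1-s_{1}-s_{2}-z)
\]
inside the integrand, and factor out $f(s_{1}+s_{2})$ using $f(s_{1}+s_{2}+z)/f(s_{1}+s_{2})=(2\pi)^{z}\,\Gamma(1-s_{1}-s_{2}-z)/\Gamma(1-s_{1}-s_{2})$; expansions of the latter in inverse powers of $s_{1}+s_{2}$ will produce the Pochhammer denominators $(s_{1}+s_{2})_{j}$ appearing in the statement.

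Second, rescale $z\mapsto(s_{1}+s_{2})w$ and apply the method of steepest descent. The key observation is that
\[
\left.\partial_{x}\bigl[-s_{1}\log(1+xr_{2})-s_{2}\log(1-xr_{1})\bigr]\right|_{x=0}=-s_{1}r_{2}+s_{2}r_{1}=0,
\]
so $x=0$ is a critical point of the effective phase, after a suitable identification of $w$ with $x$. The Taylor expansion of $(1+xr_{2})^{-s_{1}}(1-xr_{1})^{-s_{2}}$ about $x=0$ then supplies precisely the combinatorial coefficients $c_{j}$ stated in the theorem, and the factor $\zeta(1-s_{1}-s_{2}-z)$ tends to $1$ near the saddle (its argument having large positive real part), so it does not affect the leading asymptotic. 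The trigonometric structure of the main term then arises from two sources: the Taylor expansion of $\sin(\pi(s_{1}+s_{2}+z)/2)$ around the saddle produces both $\sin(\pi(s_{1}+s_{2})/2)$ and $\cos(\pi(s_{1}+s_{2})/2)$ pieces; one of them gives the prefactor $-\tfrac{1}{2}\sin(\pi(s_{1}+s_{2})/2)$, while the other couples to the cotangent-type factor below. The cotangent $\cot(\pi r_{2})$ together with its derivatives $\cot^{(j)}(\pi r_{2})$ comes from summing contributions of the evenly-spaced poles of $\Gamma(s_{2}+z)\Gamma(-z)$ encountered during the contour shift, via the Mittag--Leffler identity $\pi\cot(\pi x)=\sum_{k\in\mathbb{Z}}(x-k)^{-1}$ applied with $x=r_{2}$.

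The hardest part is the combinatorial bookkeeping: identifying, term by term, the coefficients produced by the saddle-point expansion with the explicit formula for $c_{j}$, and simultaneously controlling the error uniformly by the claimed $O(|s_{1}+s_{2}|^{-N-1})$. The exclusion $|s_{1}+s_{2}-2k|>|s_{1}+s_{2}|^{-1/\epsilon}$ keeps $\sin(\pi(s_{1}+s_{2})/2)$ from vanishing too quickly (which would otherwise spoil the relative error estimate), the condition $\epsilon<\Re s_{1}/\Re(s_{1}+s_{2})<1-\epsilon$ keeps $r_{2}$ bounded away from integers so that $\cot(\pi r_{2})$ and its derivatives are uniformly bounded, and the bounded-imaginary-part hypothesis $|\Im s_{j}|<1/\epsilon$ allows uniform Stirling estimates along the deformed contour.
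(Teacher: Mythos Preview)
Your approach differs substantially from the paper's. The paper does not start from a Mellin--Barnes integral for $\zeta(s_{1},s_{2})$; it invokes Matsumoto's functional equation for the double zeta function, rewrites the auxiliary term $F_{+}(s_{1},s_{2})$ as the contour integral
\[
\int_{p+i\infty}^{p-i\infty}\frac{1}{e^{-2\pi iz}-1}\,z^{s_{2}-1}(1-z)^{s_{1}-1}\,dz
\]
(plus subsidiary pieces shown to be negligible), and then Taylor-expands the explicit kernel $(e^{-2\pi iz}-1)^{-1}=-\tfrac12+\tfrac{i}{2}\cot(\pi z)$ about $z=r_{2}$. That expansion is the source of $\cot^{(j)}(\pi r_{2})$. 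The remaining factor $z^{s_{2}-1}(1-z)^{s_{1}-1}(z-r_{2})^{j}$ is handled not by steepest descent but by an exact Beta-type evaluation on the vertical line, which after the binomial rearrangement $(z-r_{2})^{j}=(zr_{1}-(1-z)r_{2})^{j}$ produces precisely $(s_{1}+s_{2})_{j}^{-1}c_{j}$ times the Beta factor $\Gamma(1-s_{1}-s_{2})/(\Gamma(1-s_{1})\Gamma(1-s_{2}))$.

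Your plan has a genuine gap in the mechanism you propose for the cotangent. The residues of $\Gamma(s_{2}+z)\Gamma(-z)$ at $z=n$ and $z=-s_{2}-n$ carry factors $(-1)^{n}/n!$ together with a full gamma value, not the bare terms $(r_{2}-k)^{-1}$ required by the Mittag--Leffler identity; summing them does not yield $\cot(\pi r_{2})$. In the paper the cotangent is never produced by a residue sum: it is already sitting in the kernel $(e^{-2\pi iz}-1)^{-1}$, which in turn arose from the geometric series $\sum_{m\ge 1}e^{-2\pi imz}$ in the derivation of the integral formula for $F_{+}$. A Mellin--Barnes route may well be workable, but not via the mechanism you describe; one would need, for instance, to apply reflection to $\Gamma(-z)$ and the functional equation to $\zeta(-z)$ and locate the saddle near $z=-s_{2}$, which essentially rederives Matsumoto's functional equation from scratch. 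The saddle-point identification of $c_{j}$ is likewise only heuristic here: the change of variable linking your contour variable $w$ to the $x$ in $(1+xr_{2})^{-s_{1}}(1-xr_{1})^{-s_{2}}$ is never specified, and no tail estimates are supplied to justify the claimed $O(|s_{1}+s_{2}|^{-N-1})$.
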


\begin{cor}[The case $N=0$]
Let $s_{1}$ and $s_{2}$ be complex variables satisfying the same
conditions in Theorem \ref{thm:intro_main}. Then, as $\Re(s_{1}),\Re(s_{2})\to-\infty$,
we have
\[
\frac{\zeta(s_{1},s_{2})}{f(s_{1}+s_{2})}=-\frac{1}{2}\sin\left(\frac{\pi}{2}(s_{1}+s_{2})\right)+\frac{1}{2}\cos\left(\frac{\pi}{2}(s_{1}+s_{2})\right)\cot\left(\frac{\pi s_{2}}{s_{1}+s_{2}}\right)+O\left(\left|s_{1}+s_{2}\right|^{-1}\right).
\]
\end{cor}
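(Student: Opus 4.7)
The plan is to apply Theorem \ref{thm:intro_main} directly with $N=0$ and verify that the finite sum collapses to a single explicit term. When $N=0$, the summation index $j$ in $\sum_{j=0}^{2N}$ ranges only over $j=0$, so the entire asymptotic expansion reduces to one summand, and the error $O(|s_1+s_2|^{-N-1})$ becomes $O(|s_1+s_2|^{-1})$ as desired.

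The remaining work is to check three notational identities for $j=0$. First, $(s_1+s_2)_0 = 1$ by the empty-product convention for the Pochhammer symbol. Second, $\cot^{(0)}(\pi r_2) = \cot(\pi r_2)$ since the zeroth derivative is the function itself. Third, $c_0 = 1$: this is immediate from either formula for $c_j$, since the constant term of $(1+xr_2)^{-s_1}(1-xr_1)^{-s_2}$ in its Taylor expansion at $x=0$ is $1$, and equivalently the only solution of $2l_2 + 3l_3 + \cdots + jl_j = 0$ in nonnegative integers is $l_2=\cdots=l_j=0$, giving an empty product equal to $1$.

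Substituting these three values and recalling $r_2 = s_2/(s_1+s_2)$ produces exactly the stated right-hand side. There is no real obstacle: the corollary is a mechanical specialization of the theorem, and the only thing to verify is consistency with the stated conventions on empty products and zeroth derivatives.
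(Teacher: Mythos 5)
Your proposal is correct and matches the paper's (implicit) argument: the corollary is exactly the $N=0$ specialization of Theorem \ref{thm:intro_main}, and your checks that $(s_{1}+s_{2})_{0}=1$, $\cot^{(0)}(\pi r_{2})=\cot(\pi r_{2})$, and $c_{0}=1$ are all that is needed. Nothing further is required.
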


\begin{rem}
If we put $\zeta^{1/2}(s_{1},s_{2})\coloneqq\zeta(s_{1},s_{2})+\frac{1}{2}\zeta(s_{1}+s_{2})$,
the equality in the theorem is equivalent to
\[
\frac{\zeta^{1/2}(s_{1},s_{2})}{f(s_{1}+s_{2})}=\frac{1}{2}\cos\left(\frac{\pi}{2}(s_{1}+s_{2})\right)\sum_{j=0}^{2N}\frac{\pi^{j}\cot^{(j)}(\pi r_{2})}{(s_{1}+s_{2})_{j}}c_{j}+O\left(\left|s_{1}+s_{2}\right|^{-N-1}\right).
\]
\end{rem}

\begin{rem}
In \cite[Theorem 6.5]{MMT-behavior}, Matsumoto, Matsusaka, and Tanackov
gave behavior of $\zeta(\overbrace{-k,\dots,-k}^{r})$ when $k$ takes
odd integer values and $k\to\infty$ for any $r$. Especially, when
$r=2$, their result gives an asymptotic behavior 
\begin{equation}
\zeta(-k,-k)\sim\frac{k}{\pi}\left(\frac{k}{2\pi e}\right)^{2k}\qquad(k\to\infty,\ k\in1+2\mathbb{Z}).\label{eq:odd_ind}
\end{equation}
There is no direct connection between Theorem \ref{thm:intro_main}
and (\ref{eq:odd_ind}) since $(s_{1},s_{2})=(-k,-k)$ does not satisfy
the assumption of Theorem \ref{thm:intro_main}.
\end{rem}

\section{Proof}

\subsection{Some lemmas}

By the functional equation for the double zeta function proved by
Matsumoto in \cite{FuncEqDZ}, we have
\begin{align}
 & \frac{1}{(2\pi)^{s_{1}+s_{2}-1}\Gamma(1-s_{1})}\left(\zeta(s_{1},s_{2})-\frac{\Gamma(1-s_{1})\Gamma(s_{1}+s_{2}-1)}{\Gamma(s_{2})}\zeta(s_{1}+s_{2}-1)\right)\nonumber \\
 & =\frac{1}{i^{s_{1}+s_{2}-1}\Gamma(s_{2})}\left(\zeta(1-s_{2},1-s_{1})-\frac{\Gamma(s_{2})\Gamma(1-s_{1}-s_{2})}{\Gamma(1-s_{1})}\zeta(1-s_{1}-s_{2})\right)\label{eq:e1}\\
 & \quad+2i\sin\left(\frac{\pi}{2}(s_{1}+s_{2}-1)\right)F_{+}(s_{1},s_{2}),\nonumber 
\end{align}
where $i^{t}$ is $\exp(\pi it/2)$ for $t\in\mathbb{C}$ and $F_{+}(s_{1},s_{2})$
is an analytic continuation of
\[
\sum_{k=1}^{\infty}\sigma_{s_{1}+s_{2}-1}(k)\Psi(s_{2},s_{1}+s_{2};2\pi ik)
\]
which converges in the region $\Re s_{1}<0$, $\Re s_{2}>1$. Here,
$\sigma_{s_{1}+s_{2}-1}(k)$ and $\Psi(s_{2},s_{1}+s_{2};2\pi ik)$
are defined by
\[
\sigma_{s}(k)=\sum_{m\mid k}m^{s}
\]
and
\[
\Psi(s_{2},s_{1}+s_{2};2\pi ik)=\frac{1}{\Gamma(s_{2})}\int_{0}^{-i\infty}e^{-2\pi iky}y^{s_{2}-1}(1+y)^{s_{1}-1}dy,
\]
respectively.
\begin{lem}
\label{lem:Fplus}If $-\Re s_{1}$ and $-\Re s_{2}$ are large enough,
then we have
\[
F_{+}(s_{1},s_{2})=\frac{-\Gamma(1-s_{2})\zeta(1-s_{2},1-s_{1})}{2\pi ie^{\pi is_{2}}}-\frac{\Gamma(1-s_{2})}{2\pi i}\int_{p+i\infty}^{p-i\infty}\frac{1}{e^{-2\pi iz}-1}z^{s_{2}}\sum_{l=1}^{\infty}(l-z)^{s_{1}-1}\frac{dz}{z},
\]
where $p$ is any real number between $0$ and $1$.
\end{lem}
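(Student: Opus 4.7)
The plan is to first reduce $F_+(s_1,s_2)$ to a single contour integral on the negative imaginary axis, and then deform that contour onto the vertical line $\Re z = p$, extracting the double-zeta contribution from the boundary singularity at the origin. I would begin by inserting the integral representation of $\Psi(s_2,s_1+s_2;2\pi ik)$ into the defining series of $F_+$ and interchanging sum and integral, justified in the initial convergence region $\Re s_1<0$, $\Re s_2>1$. Expanding $\sigma_{s_1+s_2-1}(k)=\sum_{m\mid k}m^{s_1+s_2-1}$ via $k=mn$ and summing the inner geometric series $\sum_{n\ge 1}e^{-2\pi imny}=1/(e^{2\pi imy}-1)$ collapses the $n$-sum. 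The substitution $z=my$, combined with the key identity $m^{s_1-1}(1+z/m)^{s_1-1}=(m+z)^{s_1-1}$, then absorbs the $m^{s_1-1}$ prefactor into the sum and yields
\[
F_+(s_1,s_2)=\frac{1}{\Gamma(s_2)}\int_0^{-i\infty}\frac{z^{s_2-1}}{e^{2\pi iz}-1}\sum_{m=1}^\infty(m+z)^{s_1-1}\,dz.
\]

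Next I would perform the change of variable $z\mapsto -w$, rotating the contour onto the positive imaginary axis: the kernel becomes $1/(e^{-2\pi iw}-1)$, the sum becomes $\sum_{m\geq 1}(m-w)^{s_1-1}$, and comparing principal-branch values of $z^{s_2-1}$ at $\arg z=-\pi/2$ with $w^{s_2-1}$ at $\arg w=\pi/2$ introduces an overall factor $-e^{-i\pi s_2}$. The reflection identity $\Gamma(s_2)\Gamma(1-s_2)=\pi/\sin(\pi s_2)$ converts $e^{-i\pi s_2}/\Gamma(s_2)$ into the prefactor $\Gamma(1-s_2)/(2\pi i)$ that appears in the target formula, together with an auxiliary factor $(1-e^{-2\pi is_2})$ that is absorbed by the subsequent contour manipulation. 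I would then deform the contour off the imaginary axis onto the full vertical line $\Re w=p$, $0<p<1$, by closing in the first and fourth quadrants. In the strip $0<\Re w<p$ the integrand is holomorphic apart from the boundary pole of $1/(e^{-2\pi iw}-1)$ at $w=0$; the large-arc contributions vanish because the integrand decays like $|w|^{\Re(s_1+s_2)-1}$ in the vertical direction (and exponentially fast in the upper half-plane), while the branch points $w=l$ of $(l-w)^{s_1-1}$ all lie at $\Re w\ge 1$, outside the strip.

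The main obstacle is identifying the extra boundary contribution at $w=0$ with the double zeta value $\zeta(1-s_2,1-s_1)$. I expect this to follow by expanding $\sum_m(m-w)^{s_1-1}=\zeta(1-s_1,1-w)$ through the Hurwitz functional equation into Fourier-like series $\sum_n e^{\pm 2\pi inw}/n^{s_1}$, evaluating each resulting $n$-th term against $w^{s_2-1}/(e^{-2\pi iw}-1)$ by residues at the integer points $w=l$, and recognizing the resulting doubly-indexed sum over pairs $(n,l)$ with $n<l$ as $\sum_{0<n<l}n^{s_2-1}l^{s_1-1}=\zeta(1-s_2,1-s_1)$. The book-keeping of branch cuts and the cancellation of spurious $(1-e^{-2\pi is_2})$ factors between the change of variable and the boundary expansion is the delicate part. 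Once all constants match, the identity is first established in the initial convergence region $\Re s_1<0$, $\Re s_2>1$ and then extended by analytic continuation to the regime where $-\Re s_1$ and $-\Re s_2$ are both large.
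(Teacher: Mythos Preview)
Your first stage --- inserting the integral for $\Psi$, summing the divisor sum geometrically, and substituting $z=my$ to arrive at
\[
F_+(s_1,s_2)=\frac{1}{\Gamma(s_2)}\int_0^{-i\infty}\frac{z^{s_2-1}}{e^{2\pi iz}-1}\sum_{m\ge 1}(m+z)^{s_1-1}\,dz
\]
--- is correct and coincides with the paper. The divergence, and the gap, is in what you do next.

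The paper does \emph{not} substitute $z\mapsto -w$ at this point. Instead it replaces the ray $[0,-i\infty)$ by a Hankel contour $C$ encircling the origin; this is what produces the factor $(e^{2\pi is_2}-1)^{-1}$ and hence the prefactor $\Gamma(1-s_2)/(2\pi i e^{\pi is_2})$, and it is also what gives the analytic continuation to $\Re s_2$ large negative. Once $\Re s_2\ll 0$, the Hankel contour can be opened into two full vertical lines at $\Re z=\pm p$, yielding $G_+-G_-$. The term $\zeta(1-s_2,1-s_1)$ then drops out immediately as the sum of residues of $G_+$ at the \emph{positive integers} $z=m$ of $1/(e^{2\pi iz}-1)$; no Hurwitz functional equation is needed. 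The remaining integral $G_-$ is turned into the stated integral by the substitution $z\mapsto -z$ applied to a full vertical line, which is unproblematic.

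Your route runs into trouble precisely where you flag it as ``delicate''. After $z\mapsto -w$ you have an integral over the \emph{half-line} $[0,+i\infty)$, and you propose to deform it to the \emph{full} line $\Re w=p$. These contours are not related by a shift through a region where the integrand is meromorphic: the origin is a branch point of $w^{s_2-1}$, not a pole, so there is no well-defined ``boundary residue at $w=0$'' to extract, and nothing in the strip $0<\Re w<p<1$ supplies the missing half-line $(-i\infty,0]$. Your proposed remedy --- expanding $\sum_m(m-w)^{s_1-1}$ via the Hurwitz functional equation and then summing residues at integers $w=l$ --- is both vague and in the wrong region (there are no integers in $0<\Re w<p<1$), and would in any case amount to rediscovering the Hankel decomposition through a back door. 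The ``auxiliary factor $(1-e^{-2\pi is_2})$'' you obtain is exactly the signal that a Hankel contour is what is really going on; use it explicitly, open it to $G_+-G_-$, and the double-zeta term will appear as a clean residue sum rather than as an unresolved boundary contribution.
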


\begin{proof}
By definition, in the region $\Re s_{1}<0$, $\Re s_{2}>1$, we have
\begin{align*}
 & \sum_{k=1}^{\infty}\sigma_{s_{1}+s_{2}-1}(k)\Psi(s_{2},s_{1}+s_{2};2\pi ik)\\
 & =\sum_{l=1}^{\infty}\sum_{m=1}^{\infty}l^{s_{1}+s_{2}-1}\Psi(s_{2},s_{1}+s_{2};2\pi ilm)\\
 & =\sum_{l=1}^{\infty}\sum_{m=1}^{\infty}l^{s_{1}+s_{2}-1}\frac{1}{\Gamma(s_{2})}\int_{0}^{-i\infty}e^{-2\pi ilmy}y^{s_{2}-1}(1+y)^{s_{1}-1}dy\\
 & =\sum_{l=1}^{\infty}l^{s_{1}+s_{2}-1}\frac{1}{\Gamma(s_{2})}\int_{0}^{-i\infty}\frac{1}{e^{2\pi ily}-1}y^{s_{2}-1}(1+y)^{s_{1}-1}dy\\
 & =\frac{1}{\Gamma(s_{2})}\int_{0}^{-i\infty}\frac{1}{e^{2\pi iz}-1}z^{s_{2}-1}\sum_{l=1}^{\infty}(z+l)^{s_{1}-1}dz\qquad(y=zl^{-1}).
\end{align*}
Thus the meromorphic continuation of $F_{+}(s_{1},s_{2})$ for $\Re s_{1}<0$
and $s_{2}\in\mathbb{C}$ is given by
\begin{align*}
F_{+}(s_{1},s_{2}) & =\frac{1}{(e^{2\pi is_{2}}-1)\Gamma(s_{2})}\int_{C}\frac{1}{e^{2\pi iz}-1}z^{s_{2}-1}\sum_{l=1}^{\infty}(z+l)^{s_{1}-1}dz\\
 & =\frac{\Gamma(1-s_{2})}{2\pi ie^{\pi is_{2}}}\int_{C}\frac{1}{e^{2\pi iz}-1}z^{s_{2}-1}\sum_{l=1}^{\infty}(z+l)^{s_{1}-1}dz,
\end{align*}
where $C$ is the contour which starts from $-i\infty$ and approaches
to the origin and encircles the origin counterclockwisely with a small
radius and back to $-i\infty$. Furthermore, if the real part of $s_{2}$
is large negative enough, then
\[
\int_{C}\frac{1}{e^{2\pi iz}-1}z^{s_{2}-1}\sum_{l=1}^{\infty}(z+l)^{s_{1}-1}dz=G_{+}-G_{-},
\]
where 
\[
G_{\pm}=\int_{\pm p-i\infty}^{\pm p+i\infty}\frac{1}{e^{2\pi iz}-1}z^{s_{2}-1}\sum_{l=1}^{\infty}(z+l)^{s_{1}-1}dz.
\]
Here $p$ is an arbitrary real number between $0$ and $1$, and the
integral paths are along with $\{z\mid\Re(z)=\pm p\}$. Note that
the residue of 
\[
\frac{1}{e^{2\pi iz}-1}z^{s_{2}-1}\sum_{l=1}^{\infty}(z+l)^{s_{1}-1}
\]
at $z=m$ for $m>0$ is given by
\[
\frac{m^{s_{2}-1}\sum_{l=1}^{\infty}(m+l)^{s_{1}-1}}{2\pi i}.
\]
Thus, by the residue theorem,
\[
G_{+}=-\sum_{m>0}m^{s_{2}-1}\sum_{l=1}^{\infty}(m+l)^{s_{1}-1}=-\zeta(1-s_{2},1-s_{1}).
\]
On the other hand,
\begin{align*}
G_{-} & =\int_{-p-i\infty}^{-p+i\infty}\frac{1}{e^{2\pi iz}-1}z^{s_{2}}\sum_{l=1}^{\infty}(z+l)^{s_{1}-1}\frac{dz}{z}\\
 & =e^{\pi is_{2}}\int_{p+i\infty}^{p-i\infty}\frac{1}{e^{-2\pi iz}-1}z^{s_{2}}\sum_{l=1}^{\infty}(l-z)^{s_{1}-1}\frac{dz}{z}\qquad(z\mapsto-z).
\end{align*}
Hence,
\begin{align*}
F_{+}(s_{1},s_{2}) & =\frac{\Gamma(1-s_{2})}{2\pi ie^{\pi is_{2}}}(G_{+}-G_{-})\\
 & =\frac{-\Gamma(1-s_{2})\zeta(1-s_{2},1-s_{1})}{2\pi ie^{\pi is_{2}}}-\frac{\Gamma(1-s_{2})}{2\pi i}\int_{p+i\infty}^{p-i\infty}\frac{1}{e^{-2\pi iz}-1}z^{s_{2}}\sum_{l=1}^{\infty}(l-z)^{s_{1}-1}\frac{dz}{z}.
\end{align*}
\end{proof}
\begin{lem}
\label{lem:Beta_integral}For $s_{1},s_{2}\in\mathbb{C}$ with $\Re(s_{1}+s_{2})<1$,
we have
\[
\int_{p+i\infty}^{p-i\infty}z^{s_{2}-1}(1-z)^{s_{1}-1}dz=\frac{-2\pi i\Gamma(1-s_{1}-s_{2})}{\Gamma(1-s_{1})\Gamma(1-s_{2})},
\]
where $p$ is any real number between $0$ and $1$.
\end{lem}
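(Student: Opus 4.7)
The plan is to reduce the contour integral to familiar real integrals by splitting $(1-z)^{s_1-1}$ via the Gamma integral representation
\[
(1-z)^{s_1-1} = \frac{1}{\Gamma(1-s_1)}\int_0^\infty t^{-s_1}e^{-(1-z)t}\,dt,
\]
valid for $\Re(1-z)>0$ (automatic on the contour since $\Re z=p<1$) and $\Re s_1<1$ (arranged in a subregion, to be removed at the end by analytic continuation). Substituting this and exchanging the order of integration --- which Fubini justifies provided $\Re s_1<1$ and $\Re s_2<0$, since then the modulus of the integrand is dominated by the integrable function $t^{-\Re s_1}e^{-(1-p)t}|z|^{\Re s_2-1}$ on $(0,\infty)\times\{\Re z=p\}$ --- rewrites the integral as
\[
\frac{1}{\Gamma(1-s_1)}\int_0^\infty t^{-s_1}e^{-t}\left(\int_{p+i\infty}^{p-i\infty}z^{s_2-1}e^{tz}\,dz\right)dt.
\]

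Next, I would evaluate the inner $z$-integral by the Hankel formula for the reciprocal Gamma function. Since $z^{s_2-1}$ has its branch cut on $(-\infty,0]$ and $e^{tz}$ decays as $\Re z\to-\infty$ for $t>0$, the vertical contour can be deformed onto a Hankel contour wrapping this cut. The standard identity
\[
\frac{1}{2\pi i}\int_{p-i\infty}^{p+i\infty}z^{s_2-1}e^{tz}\,dz=\frac{t^{-s_2}}{\Gamma(1-s_2)}
\]
then gives, after reversing orientation, the value $-2\pi i\,t^{-s_2}/\Gamma(1-s_2)$ for the inner integral.

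Substituting back leaves
\[
\frac{-2\pi i}{\Gamma(1-s_1)\Gamma(1-s_2)}\int_0^\infty t^{-s_1-s_2}e^{-t}\,dt = \frac{-2\pi i\,\Gamma(1-s_1-s_2)}{\Gamma(1-s_1)\Gamma(1-s_2)},
\]
where the hypothesis $\Re(s_1+s_2)<1$ ensures convergence of the final Gamma integral. Both sides are meromorphic on the stated region, so the identity extends from the convergence subregion $\{\Re s_1<1,\ \Re s_2<0\}$ to all of $\{\Re(s_1+s_2)<1\}$ by analytic continuation.

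The main obstacle is the careful bookkeeping of the various convergence conditions together with the justification of the Hankel-type deformation for the inner integral; in particular one must verify that the horizontal segments at $\Im z=\pm R$ joining the vertical line to the Hankel contour contribute nothing in the limit $R\to\infty$, which is where the exponential decay of $e^{tz}$ has to be balanced against the polynomial growth of $z^{s_2-1}$.
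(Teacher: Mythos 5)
Your argument is correct, but it takes a genuinely different route from the paper's. The paper first reduces, via the identity theorem, to the case $\Re s_1>0$, then deforms the vertical line rightwards onto a Hankel-type loop around the branch point $z=1$, turning the integral into $(e^{\pi i(s_1-1)}-e^{-\pi i(s_1-1)})\int_1^\infty z^{s_2-1}(z-1)^{s_1-1}\,dz$; the substitution $z=x^{-1}$ then yields the Euler Beta integral $\int_0^1(1-x)^{s_1-1}x^{-s_1-s_2}\,dx$ and the reflection formula gives the stated constant. You instead factor $(1-z)^{s_1-1}$ through its Laplace-type Gamma representation, apply Fubini in the strip $\Re s_1<1$, $\Re s_2<0$ (your dominating function is correct, and in that strip both marginal integrals converge), evaluate the inner integral by the Bromwich--Hankel formula for $1/\Gamma$ --- note that with $\Re(1-s_2)>1$ that line integral even converges absolutely --- and finish with $\int_0^\infty t^{-s_1-s_2}e^{-t}\,dt=\Gamma(1-s_1-s_2)$ and analytic continuation. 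The paper's route buys self-containedness: one deformation, one classical Beta evaluation, the reflection formula, and no interchange of integrals to justify. Your route pushes all the branch-point bookkeeping into one classical identity (the inverse Laplace transform of $z^{s_2-1}$), at the price of the Fubini step and the deformation onto the cut $(-\infty,0]$ that you rightly flag; that deformation does go through, since on the horizontal segments $|z^{s_2-1}|=O(R^{\Re s_2-1})\to 0$ while $\int_{-\infty}^{p}e^{t\sigma}\,d\sigma=e^{tp}/t$ stays bounded, and the far-left vertical piece is $O(Re^{-tX})$ for fixed $R$. Two small touch-ups: both sides are in fact holomorphic (not merely meromorphic) on $\{\Re(s_1+s_2)<1\}$, because $\Re(1-s_1-s_2)>0$ keeps $\Gamma(1-s_1-s_2)$ pole-free and the contour integral converges absolutely and locally uniformly there; and the final step should invoke the identity theorem on this connected region starting from the nonempty open subset $\{\Re s_1<1,\ \Re s_2<0\}$ --- exactly parallel to the paper's own use of the identity theorem, only in the opposite direction.
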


\begin{proof}
By the identity theorem, it is enough to only consider the case $\Re(s_{1})>0$.
Then, by Cauchy's integral theorem, we have
\begin{align*}
\int_{p+i\infty}^{p-i\infty}z^{s_{2}-1}(1-z)^{s_{1}-1}dz & =\int_{C}z^{s_{2}-1}(1-z)^{s_{1}-1}dz\\
 & =(e^{\pi i(s_{1}-1)}-e^{-\pi i(s_{1}-1)})\int_{1}^{\infty}z^{s_{2}-1}(z-1)^{s_{1}-1}dz,
\end{align*}
where $C$ is the contour which starts from $+\infty$ and approaches
to $1$ and encircles $1$ counterclockwisely with a small radius
and back to $+\infty$. Furthermore, we have
\begin{align*}
 & (e^{\pi i(s_{1}-1)}-e^{-\pi i(s_{1}-1)})\int_{1}^{\infty}z^{s_{2}-1}(z-1)^{s_{1}-1}dz\\
 & =-(e^{\pi is_{1}}-e^{-\pi is_{1}})\int_{0}^{1}(1-x)^{s_{1}-1}x^{-s_{1}-s_{2}}dx\qquad(z=x^{-1})\\
 & =\frac{-(e^{\pi is_{1}}-e^{-\pi is_{1}})\Gamma(s_{1})\Gamma(1-s_{1}-s_{2})}{\Gamma(1-s_{2})}\\
 & =\frac{-2\pi i\Gamma(1-s_{1}-s_{2})}{\Gamma(1-s_{1})\Gamma(1-s_{2})}.\qedhere
\end{align*}
\end{proof}

\subsection{Main theorem}
\begin{thm}
\label{thm:main}Fix a real number $\epsilon>0$ and a nonnegative
integer $N$. Let $s_{1}$ and $s_{2}$ be complex variables satisfying
$\left|\Im s_{1}\right|<\frac{1}{\epsilon}$, $\left|\Im s_{2}\right|<\frac{1}{\epsilon}$,
and $\epsilon<\frac{\Re s_{1}}{\Re(s_{1}+s_{2})}<1-\epsilon$. Then,
as $\Re(s_{1}),\Re(s_{2})\to-\infty$, we have
\begin{align*}
\frac{\zeta(s_{1},s_{2})}{f(s_{1}+s_{2})} & =-\frac{1}{2}\sin\left(\frac{\pi}{2}(s_{1}+s_{2})\right)+\frac{1}{2}\cos\left(\frac{\pi}{2}(s_{1}+s_{2})\right)\sum_{j=0}^{2N}\frac{\pi^{j}\cot^{(j)}(\pi r_{2})}{(s_{1}+s_{2})_{j}}c_{j}\\
 & \quad+\frac{\sin(\pi s_{2})\Gamma(1-s_{1})\Gamma(1-s_{2})\zeta(2-s_{1}-s_{2})}{4\pi\Gamma(1-s_{1}-s_{2})\sin(\frac{\pi}{2}(s_{1}+s_{2}))}+O\left(\left|s_{1}+s_{2}\right|^{-N-1}\right),
\end{align*}
where $r_{j}\coloneqq\frac{s_{j}}{s_{1}+s_{2}}\quad(j=1,2)$ and 
\begin{align*}
c_{j} & \coloneqq{\rm Coeff}((1+xr_{2})^{-s_{1}}(1-xr_{1})^{-s_{2}},x^{j})\\
 & =\sum_{2l_{2}+3l_{3}+\cdots+jl_{j}=j}(s_{1}+s_{2})^{l_{2}+\cdots+l_{j}}\prod_{k=2}^{j}\frac{1}{l_{k}!}\left(\frac{r_{1}(-r_{2})^{k}+r_{2}r_{1}^{k}}{k}\right)^{l_{k}}.
\end{align*}
\end{thm}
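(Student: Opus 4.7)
The plan is to combine the functional equation (\ref{eq:e1}) with the integral representation of $F_{+}$ in Lemma \ref{lem:Fplus}, then extract the asymptotic main term by Taylor expanding $\cot(\pi z)$ around the saddle $z_{0}=r_{2}$ of $z^{s_{2}}(1-z)^{s_{1}}$. Write $s\coloneqq s_{1}+s_{2}$ throughout.

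First I would solve (\ref{eq:e1}) for $\zeta(s_{1},s_{2})/f(s)$ and simplify the two explicit $\zeta$-pieces. Using the Riemann functional equation $\zeta(s-1)=-2^{s-1}\pi^{s-2}\Gamma(2-s)\cos(\pi s/2)\zeta(2-s)$ together with $\Gamma(s-1)\Gamma(2-s)=-\pi/\sin(\pi s)$ and $\Gamma(s_{2})\Gamma(1-s_{2})=\pi/\sin(\pi s_{2})$, the $\zeta(s-1)$ piece collapses to exactly $\frac{\sin(\pi s_{2})\Gamma(1-s_{1})\Gamma(1-s_{2})\zeta(2-s)}{4\pi\Gamma(1-s)\sin(\pi s/2)}$. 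The $\zeta(1-s)$ piece contributes $-\tfrac{1}{2}i^{1-s}\zeta(1-s)=-\tfrac{1}{2}\sin(\pi s/2)-\tfrac{i}{2}\cos(\pi s/2)+O(2^{\Re s})$ via $i^{1-s}=\sin(\pi s/2)+i\cos(\pi s/2)$ and $\zeta(1-s)=1+O(2^{\Re s})$. After substituting Lemma \ref{lem:Fplus}, a direct calculation using $\sin(\pi(s-1)/2)=-\cos(\pi s/2)$ shows that the two $\zeta(1-s_{2},1-s_{1})$ contributions cancel, leaving only the contour-integral piece of $F_{+}$.

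That integral is $J\coloneqq\int_{p+i\infty}^{p-i\infty}\frac{z^{s_{2}-1}}{e^{-2\pi iz}-1}\sum_{l=1}^{\infty}(l-z)^{s_{1}-1}\,dz$. Along the contour, $|l-z|\geq l-p$, so the terms with $l\geq 2$ are exponentially small in $|\Re s_{1}|$ and one is reduced to $l=1$. Splitting $(e^{-2\pi iz}-1)^{-1}=-\tfrac{1}{2}+\tfrac{i}{2}\cot(\pi z)$, the $-\tfrac{1}{2}$ part is evaluated by Lemma \ref{lem:Beta_integral} and, once multiplied by the overall prefactor $\frac{\cos(\pi s/2)\Gamma(1-s_{1})\Gamma(1-s_{2})}{2\pi\Gamma(1-s)}$ accumulated above, contributes exactly $+\tfrac{i}{2}\cos(\pi s/2)$, which cancels the residual from $\zeta(1-s)$ and leaves the clean leading term $-\tfrac{1}{2}\sin(\pi s/2)$.

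For the $\cot$ part I would Taylor-expand $\cot(\pi z)=\sum_{j\geq 0}\frac{\pi^{j}\cot^{(j)}(\pi r_{2})}{j!}(z-r_{2})^{j}$ and integrate termwise. The key algebraic identity is
\[
\int_{p+i\infty}^{p-i\infty}z^{s_{2}-1}(1-z)^{s_{1}-1}(z-r_{2})^{j}\,dz=-\frac{2\pi i\,j!\,c_{j}\,\Gamma(1-s)}{(s)_{j}\,\Gamma(1-s_{1})\Gamma(1-s_{2})},
\]
proved by writing $z-r_{2}=r_{1}z-r_{2}(1-z)$, expanding $(z-r_{2})^{j}$ into a sum of monomials $z^{a}(1-z)^{b}$, applying Lemma \ref{lem:Beta_integral} with shifted parameters, and matching coefficients against the Taylor expansion of $(1+xr_{2})^{-s_{1}}(1-xr_{1})^{-s_{2}}$. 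Multiplication by the prefactor then yields precisely $\tfrac{1}{2}\cos(\pi s/2)\sum_{j}\frac{\pi^{j}\cot^{(j)}(\pi r_{2})c_{j}}{(s)_{j}}$, matching the claim. The main obstacle is the truncation-error analysis: the Taylor series of $\cot$ has only finite radius of convergence, so one cannot integrate it termwise over an unbounded contour. The rigorous argument is to split the contour into a $\delta$-neighbourhood of $r_{2}$ and its complement; outside, the saddle-point decay of $|z^{s_{2}}(1-z)^{s_{1}}|$ makes everything negligible, while inside, the Taylor remainder $R_{2N}(z)=O(|z-r_{2}|^{2N+1})$ combined with Gaussian integration against $z^{s_{2}-1}(1-z)^{s_{1}-1}$ (effective width $|s|^{-1/2}$) and the sparsity bound $c_{j}/(s)_{j}=O(|s|^{-\lceil j/2\rceil})$ (forced by $2l_{2}+\cdots+jl_{j}=j$) yield the claimed $O(|s|^{-N-1})$ error, with the hypotheses on $\Re r_{j}$ and $|\Im s_{j}|$ ensuring that all trigonometric and Gamma factors remain uniformly controlled.
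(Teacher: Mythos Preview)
Your overall strategy matches the paper's almost exactly: plug Lemma~\ref{lem:Fplus} into (\ref{eq:e1}), isolate the $l=1$ summand, Taylor-expand $(e^{-2\pi iz}-1)^{-1}$ around the saddle $z=r_{2}$, and evaluate each moment by Lemma~\ref{lem:Beta_integral}. The identification of the key integral identity and the $c_{j}/(s)_{j}=O(|s|^{-\lceil j/2\rceil})$ bound are both on target.

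There is, however, one concrete error. You assert that ``a direct calculation \dots\ shows that the two $\zeta(1-s_{2},1-s_{1})$ contributions cancel''. They do not. Writing $s=s_{1}+s_{2}$, the combined coefficient of $\zeta(1-s_{2},1-s_{1})$ in (\ref{eq:e2}) is
\[
\frac{\Gamma(1-s_{1})\Gamma(1-s_{2})}{2\pi\Gamma(1-s)}\Bigl(i^{1-s}\sin(\pi s_{2})+\cos\tfrac{\pi s}{2}\,e^{-\pi is_{2}}\Bigr)
=\frac{\Gamma(1-s_{1})\Gamma(1-s_{2})}{2\pi\Gamma(1-s)}\cos\!\tfrac{\pi(s_{1}-s_{2})}{2},
\]
which is nonzero in general. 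What actually happens (and what the paper uses) is that each of these two terms is individually $O(M^{-h})$ for every $h>0$, because Stirling gives $\Gamma(1-s_{1})\Gamma(1-s_{2})/\Gamma(1-s)\asymp |r_{1}^{-s_{1}}r_{2}^{-s_{2}}|\,|s|^{-1/2}$ and the factor $|r_{1}^{-s_{1}}r_{2}^{-s_{2}}|$ decays like $e^{-cM}$ under the hypothesis $\epsilon<\Re r_{1}<1-\epsilon$. So the fix is easy, but your stated reason is wrong.

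A related warning: the same exponentially \emph{large} Gamma ratio multiplies the $l\ge2$ integral, so ``$|l-z|\ge l-p$ makes the $l\ge2$ terms exponentially small'' is not enough by itself. You must show the integral itself is $O(|r_{2}^{s_{2}}|)$ (the paper does this by choosing $p=1-\epsilon/2$ and splitting $|t|\lessgtr2$), so that after multiplication by the Gamma ratio the product is still $O(M^{-h})$. Once these two points are repaired, your argument coincides with the paper's.
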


\begin{proof}
Without loss of generality, we can assume that $\epsilon$ is small
enough. We assume that the real parts of $s_{1}$ and $s_{2}$ are
large negative enough. By (\ref{eq:e1}) and Lemma \ref{lem:Fplus},
we have
\begin{align*}
\zeta(s_{1},s_{2}) & =\frac{\Gamma(1-s_{1})\Gamma(s_{1}+s_{2}-1)}{\Gamma(s_{2})}\zeta(s_{1}+s_{2}-1)\\
 & \quad+\left(\frac{2\pi}{i}\right)^{s_{1}+s_{2}-1}\frac{\Gamma(1-s_{1})}{\Gamma(s_{2})}\zeta(1-s_{2},1-s_{1})\\
 & \quad-\left(\frac{2\pi}{i}\right)^{s_{1}+s_{2}-1}\Gamma(1-s_{1}-s_{2})\zeta(1-s_{1}-s_{2})\\
 & \quad+(2\pi)^{s_{1}+s_{2}-1}\Gamma(1-s_{1})\cdot\cos\left(\frac{\pi}{2}(s_{1}+s_{2})\right)\cdot\frac{\Gamma(1-s_{2})\zeta(1-s_{2},1-s_{1})}{\pi e^{\pi is_{2}}}\\
 & \quad+(2\pi)^{s_{1}+s_{2}-1}\Gamma(1-s_{1})\cdot\cos\left(\frac{\pi}{2}(s_{1}+s_{2})\right)\cdot\frac{\Gamma(1-s_{2})}{\pi}\int_{p+i\infty}^{p-i\infty}\frac{1}{e^{-2\pi iz}-1}z^{s_{2}}\sum_{l=1}^{\infty}(l-z)^{s_{1}-1}\frac{dz}{z},
\end{align*}
and thus
\begin{align}
\frac{\zeta(s_{1},s_{2})}{f(s_{1}+s_{2})} & =\frac{\Gamma(1-s_{1})\Gamma(s_{1}+s_{2}-1)}{f(s_{1}+s_{2})\Gamma(s_{2})}\zeta(s_{1}+s_{2}-1)\label{eq:e2}\\
 & \quad+\frac{i^{1-s_{1}-s_{2}}\Gamma(1-s_{1})\Gamma(1-s_{2})\sin(\pi s_{2})}{2\pi\Gamma(1-s_{1}-s_{2})}\zeta(1-s_{2},1-s_{1})\nonumber \\
 & \quad-\frac{i^{1-s_{1}-s_{2}}}{2}\zeta(1-s_{1}-s_{2})\nonumber \\
 & \quad+\frac{\Gamma(1-s_{1})\Gamma(1-s_{2})}{2\Gamma(1-s_{1}-s_{2})}\cdot\cos\left(\frac{\pi}{2}(s_{1}+s_{2})\right)\cdot\frac{\zeta(1-s_{2},1-s_{1})}{\pi e^{\pi is_{2}}}\nonumber \\
 & \quad+\frac{\Gamma(1-s_{1})\Gamma(1-s_{2})}{2\pi\Gamma(1-s_{1}-s_{2})}\cdot\cos\left(\frac{\pi}{2}(s_{1}+s_{2})\right)\int_{p+i\infty}^{p-i\infty}\frac{1}{e^{-2\pi iz}-1}z^{s_{2}-1}\sum_{l=2}^{\infty}(l-z)^{s_{1}-1}dz\nonumber \\
 & \quad+\frac{\Gamma(1-s_{1})\Gamma(1-s_{2})}{2\pi\Gamma(1-s_{1}-s_{2})}\cdot\cos\left(\frac{\pi}{2}(s_{1}+s_{2})\right)\int_{p+i\infty}^{p-i\infty}\frac{1}{e^{-2\pi iz}-1}z^{s_{2}}(1-z)^{s_{1}-1}\frac{dz}{z}.\nonumber 
\end{align}
Put $M=\Re(-s_{1}-s_{2})$ and let us consider the asymptotic behavior
of the right hand side of (\ref{eq:e2}) for $M\to\infty$. For the
first term, by the functional equation of the Riemann zeta function,
we have 
\begin{align}
 & \frac{\Gamma(1-s_{1})\Gamma(s_{1}+s_{2}-1)}{f(s_{1}+s_{2})\Gamma(s_{2})}\zeta(s_{1}+s_{2}-1)\nonumber \\
 & =\frac{\Gamma(1-s_{1})}{f(s_{1}+s_{2})\Gamma(s_{2})}\frac{\zeta(2-s_{1}-s_{2})}{2^{2-s_{1}-s_{2}}\pi^{1-s_{1}-s_{2}}\sin(\pi(\frac{2-s_{1}-s_{2}}{2}))}\nonumber \\
 & =\frac{\Gamma(1-s_{1})\zeta(2-s_{1}-s_{2})}{4\Gamma(s_{2})\Gamma(1-s_{1}-s_{2})\sin(\frac{\pi(s_{1}+s_{2})}{2})}\nonumber \\
 & =\frac{\sin(\pi s_{2})\Gamma(1-s_{1})\Gamma(1-s_{2})\zeta(2-s_{1}-s_{2})}{4\pi\Gamma(1-s_{1}-s_{2})\sin(\frac{\pi(s_{1}+s_{2})}{2})}.\label{eq:term_1st}
\end{align}
For the second term, we have
\begin{equation}
\frac{i^{1-s_{1}-s_{2}}\Gamma(1-s_{1})\Gamma(1-s_{2})\sin(\pi s_{2})}{2\pi\Gamma(1-s_{1}-s_{2})}\zeta(1-s_{2},1-s_{1})=O(M^{-h})\label{eq:term_2nd}
\end{equation}
for any $h>0$. For the third term, we have
\begin{equation}
-\frac{i^{1-s_{1}-s_{2}}}{2}\zeta(1-s_{1}-s_{2})=-\frac{i^{1-s_{1}-s_{2}}}{2}+O(M^{-h})\label{eq:term_3rd}
\end{equation}
for any $h>0$. For the fourth term, we have
\begin{equation}
\frac{\Gamma(1-s_{1})\Gamma(1-s_{2})}{2\Gamma(1-s_{1}-s_{2})}\cdot\cos\left(\frac{\pi}{2}(s_{1}+s_{2})\right)\cdot\frac{\zeta(1-s_{2},1-s_{1})}{\pi e^{\pi is_{2}}}=O(M^{-h})\label{eq:term_4th}
\end{equation}
for any $h>0$.

Let us estimate the fifth term. Note that the integral does not depend
on the choice of $0<p<1$.
\begin{align*}
 & \int_{p+i\infty}^{p-i\infty}\frac{1}{e^{-2\pi iz}-1}z^{s_{2}-1}\sum_{l=2}^{\infty}(l-z)^{s_{1}-1}dz\\
 & =\int_{t\in\mathbb{R}}\frac{1}{e^{-2\pi i(p-it)}-1}(p-it)^{s_{2}-1}\sum_{l=2}^{\infty}(l-p+it)^{s_{1}-1}(-i)dt\\
 & =\int_{|t|<2}\frac{1}{e^{-2\pi i(p-it)}-1}(p-it)^{s_{2}-1}\sum_{l=2}^{\infty}(l-p+it)^{s_{1}-1}(-i)dt\\
 & \quad+\int_{2<|t|}\frac{1}{e^{-2\pi i(p-it)}-1}(p-it)^{s_{2}-1}\sum_{l=2}^{\infty}(l-p+it)^{s_{1}-1}(-i)dt.
\end{align*}
Now, let $p=1-\frac{\epsilon}{2}$. Then,
\begin{align*}
 & \int_{|t|<2}\frac{1}{e^{-2\pi i(p-it)}-1}(p-it)^{s_{2}-1}\sum_{l=2}^{\infty}(l-p+it)^{s_{1}-1}(-i)dt\\
 & =O\left(\int_{|t|<2}\left|p-it\right|^{\Re(s_{2})-1}\cdot\sum_{l=2}^{\infty}\left|l-p+it\right|^{\Re(s_{1})-1}dt\right)\\
 & =O\left(p^{\Re(s_{2})-1}\cdot\sum_{l=2}^{\infty}(l-p)^{\Re(s_{1})-1}\right)\\
 & =O(\left|r_{2}^{s_{2}}\right|)\qquad(\left|r_{2}\right|<p).
\end{align*}
On the other hand,
\begin{align*}
 & \int_{2<|t|}\frac{1}{e^{-2\pi i(p-it)}-1}(p-it)^{s_{2}-1}\sum_{l=2}^{\infty}(l-p+it)^{s_{1}-1}(-i)dt\\
 & =O\left(\int_{t=2}^{\infty}\sum_{l=2}^{\infty}\left|l-p+it\right|^{\Re(s_{1})-1}dt\right)\\
 & =O\left(\int_{t=2}^{\infty}t\sum_{l=2}^{\infty}\left((l-p)^{2}+t^{2}\right)^{\frac{\Re(s_{1})-1}{2}}dt\right)\\
 & =O\left(\sum_{l=2}^{\infty}\left[\frac{1}{\Re(s_{1})+1}\left((l-p)^{2}+t^{2}\right)^{\frac{\Re(s_{1})+1}{2}}\right]_{2}^{\infty}\right)\\
 & =O\left(\sum_{l=2}^{\infty}\left((l-p)^{2}+4\right)^{\frac{\Re(s_{1})+1}{2}}\right)\\
 & =O\left(\sum_{l=2}^{\infty}(l-p)^{\Re(s_{1})+1}\right)\\
 & =O(1).
\end{align*}
Thus,
\[
\int_{p+i\infty}^{p-i\infty}\frac{1}{e^{-2\pi iz}-1}z^{s_{2}-1}\sum_{l=2}^{\infty}(l-z)^{s_{1}-1}dz=O(\left|r_{2}^{s_{2}}\right|)+O(1).
\]
Hence
\begin{align}
 & \frac{\Gamma(1-s_{1})\Gamma(1-s_{2})}{2\pi\Gamma(1-s_{1}-s_{2})}\cdot\cos\left(\frac{\pi}{2}(s_{1}+s_{2})\right)\int_{p+i\infty}^{p-i\infty}\frac{1}{e^{-2\pi iz}-1}z^{s_{2}-1}\sum_{l=2}^{\infty}(l-z)^{s_{1}-1}dz\nonumber \\
 & =O(M^{-h})\label{eq:term_5th}
\end{align}
for any $h>0$.

Let us estimate the sixth term. Fix $N>0$. Note that the integral
does not depend on the choice of $0<p<1$. We estimate the integral
by considering the case $p=\Re(r_{2})$. We first decompose as
\begin{align*}
 & \int_{r_{2}+i\infty}^{r_{2}-i\infty}\frac{1}{e^{-2\pi iz}-1}z^{s_{2}}(1-z)^{s_{1}-1}\frac{dz}{z}\\
 & =\int_{r_{2}+i\infty}^{r_{2}-i\infty}z^{s_{2}-1}(1-z)^{s_{1}-1}\sum_{j=0}^{N}a_{j}(z-r_{2})^{j}dz\\
 & \quad+\int_{r_{2}+i\infty}^{r_{2}-i\infty}\left(\frac{1}{e^{-2\pi iz}-1}-\sum_{j=0}^{N}a_{j}(z-r_{2})^{j}\right)z^{s_{2}-1}(1-z)^{s_{1}-1}dz,
\end{align*}
where
\[
a_{j}\coloneqq{\rm Coeff}\left(\frac{1}{e^{-2\pi i(x+r_{2})}-1},x^{j}\right)
\]
is the coefficient of $(z-r_{2})^{j}$ in the Taylor expansion of
$\frac{1}{e^{-2\pi iz}-1}$. Note that $a_{j}$ is bounded for each
$j$ when $\Re(s_{1}),\Re(s_{2})\to-\infty$. Then, for any $\delta>0$,
we have
\begin{align}
 & \int_{r_{2}+i\infty}^{r_{2}-i\infty}\left(\frac{1}{e^{-2\pi iz}-1}-\sum_{j=0}^{N}a_{j}(z-r_{2})^{j}\right)z^{s_{2}-1}(1-z)^{s_{1}-1}dz\nonumber \\
 & =-\int_{t\in\mathbb{R},|t|<M^{-1/2+\delta}}\left(\frac{1}{e^{-2\pi i(r_{2}-it)}-1}-\sum_{j=0}^{N}a_{j}(-it)^{j}\right)(r_{2}-it)^{s_{2}-1}(r_{1}+it)^{s_{1}-1}idt\label{eq:diff_decom}\\
 & \quad-\int_{t\in\mathbb{R},M^{-1/2+\delta}<|t|<2}\left(\frac{1}{e^{-2\pi i(r_{2}-it)}-1}-\sum_{j=0}^{N}a_{j}(-it)^{j}\right)(r_{2}-it)^{s_{2}-1}(r_{1}+it)^{s_{1}-1}idt\nonumber \\
 & \quad-\int_{t\in\mathbb{R},2<|t|}\left(\frac{1}{e^{-2\pi i(r_{2}-it)}-1}-\sum_{j=0}^{N}a_{j}(-it)^{j}\right)(r_{2}-it)^{s_{2}-1}(r_{1}+it)^{s_{1}-1}idt.\nonumber 
\end{align}
Since
\[
\arg(r_{2})=O(M^{-1}),
\]
we have
\[
\left|1\pm\frac{it}{r_{j}}\right|=\left|1\pm e^{\frac{\pi i}{2}-i\arg(r_{j})}\frac{t}{\left|r_{j}\right|}\right|>\left|1-e^{\frac{\pi i}{4}}\frac{t}{\left|r_{j}\right|}\right|\geq\frac{\left|t\right|}{\sqrt{2}\left|r_{j}\right|}\qquad(j=1,2)
\]
when $M$ is large enough. Thus,
\begin{align}
 & \int_{t\in\mathbb{R},2<|t|}\left(\frac{1}{e^{-2\pi i(r_{2}-it)}-1}-\sum_{j=0}^{N}a_{j}(-it)^{j}\right)(r_{2}-it)^{s_{2}-1}(r_{1}+it)^{s_{1}-1}idt\nonumber \\
 & =O\left(\left|r_{1}^{s_{1}}r_{2}^{s_{2}}\right|\int_{t\in\mathbb{R},2<|t|}\left(1+|t|^{N}\right)\left|1-\frac{it}{r_{2}}\right|^{\Re(s_{2})-1}\left|1+\frac{it}{r_{1}}\right|^{\Re(s_{1})-1}dt\right)\nonumber \\
 & =O\left(\left|r_{1}^{s_{1}}r_{2}^{s_{2}}\right|\int_{t\in\mathbb{R},2<|t|}\left|t\right|^{N}\left(\frac{\left|t\right|}{\sqrt{2}\left|r_{2}\right|}\right)^{\Re(s_{2})-1}\left(\frac{\left|t\right|}{\sqrt{2}\left|r_{1}\right|}\right)^{\Re(s_{1})-1}dt\right)\nonumber \\
 & =O\left(\int_{t\in\mathbb{R},2<|t|}\left|t\right|^{N}\left(\frac{\left|t\right|}{\sqrt{2}}\right)^{\Re(s_{1})+\Re(s_{2})-2}dt\right)\nonumber \\
 & =O(M^{-h})\label{eq:diff_1}
\end{align}
for any $h>0$. Furthermore, we have
\begin{align*}
\left|1-\frac{it}{r_{2}}\right| & =\left|1-it\Re\left(\frac{1}{r_{2}}\right)+t\Im\left(\frac{1}{r_{2}}\right)\right|\\
 & =\sqrt{\left(1+t\Im\left(\frac{1}{r_{2}}\right)\right)^{2}+t^{2}\Re\left(\frac{1}{r_{2}}\right)^{2}}\\
 & \geq\sqrt{1+2t\Im\left(\frac{1}{r_{2}}\right)+t^{2}\Re\left(\frac{1}{r_{2}}\right)^{2}}.
\end{align*}
Here, when $\left|t\right|>M^{-1/2+\delta}$ and $M$ is large enough,
we have
\[
\left|2t\Im\left(\frac{1}{r_{2}}\right)\right|\leq\frac{1}{2}t^{2}\Re\left(\frac{1}{r_{2}}\right)^{2}
\]
since
\begin{align*}
\frac{\left|2t\Im\left(\frac{1}{r_{2}}\right)\right|}{\frac{1}{2}t^{2}\Re\left(\frac{1}{r_{2}}\right)^{2}} & =\left|\frac{4}{\Re\left(\frac{1}{r_{2}}\right)^{2}}\times\Im\left(\frac{1}{r_{2}}\right)\times\frac{1}{t}\right|\\
 & =O(1)\times O(M^{-1})\times O(M^{1/2-\delta})\\
 & =O(M^{-\frac{1}{2}-\delta}).
\end{align*}
Thus,
\begin{align*}
\left|1-\frac{it}{r_{2}}\right| & \geq\sqrt{1+\frac{1}{2}t^{2}\Re\left(\frac{1}{r_{2}}\right)^{2}}\\
 & \geq\sqrt{1+\frac{1}{2}M^{-1+2\delta}\Re\left(\frac{1}{r_{2}}\right)^{2}}
\end{align*}
when $\left|t\right|>M^{-1/2+\delta}$ and $M$ is large enough. Similarly,
we also have
\[
\left|1+\frac{it}{r_{1}}\right|\geq\sqrt{1+\frac{1}{2}M^{-1+2\delta}\Re\left(\frac{1}{r_{1}}\right)^{2}}
\]
when $\left|t\right|>M^{-1/2+\delta}$ and $M$ is large enough. Thus

\begin{align}
 & \int_{t\in\mathbb{R},M^{-1/2+\delta}<|t|<2}\left(\frac{1}{e^{-2\pi i(r_{2}-it)}-1}-\sum_{j=0}^{N}a_{j}(-it)^{j}\right)(r_{2}-it)^{s_{2}-1}(r_{1}+it)^{s_{1}-1}idt\nonumber \\
 & =O\left(\left|r_{1}^{s_{1}}r_{2}^{s_{2}}\right|\int_{t\in\mathbb{R},M^{-1/2+\delta}<|t|<2}\left|1-\frac{it}{r_{2}}\right|^{\Re(s_{2})}\left|1+\frac{it}{r_{1}}\right|^{\Re(s_{1})}dt\right)\nonumber \\
 & =O\left(\left|r_{1}^{s_{1}}r_{2}^{s_{2}}\right|\int_{t\in\mathbb{R},M^{-1/2+\delta}<|t|<2}\left(\sqrt{1+\frac{1}{2}M^{-1+2\delta}\Re\left(\frac{1}{r_{2}}\right)^{2}}\right)^{\Re(s_{2})}\left(\sqrt{1+\frac{1}{2}M^{-1+2\delta}\Re\left(\frac{1}{r_{1}}\right)^{2}}\right)^{\Re(s_{1})}dt\right)\nonumber \\
 & =O\left(\left|r_{1}^{s_{1}}r_{2}^{s_{2}}\right|\left(1+cM^{-1+2\delta}\right)^{-M/2}\right)\qquad\left(c\coloneqq\min\left(\frac{1}{2}\Re\left(\frac{1}{r_{2}}\right)^{2},\frac{1}{2}\Re\left(\frac{1}{r_{1}}\right)^{2}\right)\right)\nonumber \\
 & =O\left(\left|r_{1}^{s_{1}}r_{2}^{s_{2}}\right|\left(\left(1+cM^{-1+2\delta}\right)^{\frac{1}{cM^{-1+2\delta}}}\right)^{-cM^{2\delta}/2}\right)\nonumber \\
 & =O\left(\left|r_{1}^{s_{1}}r_{2}^{s_{2}}\right|2^{-cM^{2\delta}/2}\right).\label{eq:diff_2}
\end{align}
Note that
\[
\left|1+\frac{it}{r_{1}}\right|\geq\cos(\arg(r_{1}))=1-O(M^{-2})\qquad\text{and}\qquad\left|1-\frac{it}{r_{2}}\right|\geq\cos(\arg(r_{2}))=1-O(M^{-2})
\]
for all $t\in\mathbb{R}$. Thus, we get
\[
\left|\left(1+\frac{it}{r_{1}}\right)^{s_{1}}\right|=O(1)\qquad\text{and}\qquad\left|\left(1-\frac{it}{r_{2}}\right)^{s_{2}}\right|=O(1).
\]
Therefore, we have
\begin{align}
 & \int_{t\in\mathbb{R},|t|<M^{-1/2+\delta}}\left(\frac{1}{e^{-2\pi i(r_{2}-it)}-1}-\sum_{j=0}^{N}a_{j}(-it)^{j}\right)(r_{2}-it)^{s_{2}-1}(r_{1}+it)^{s_{1}-1}idt\nonumber \\
 & =O\left(\left|r_{1}^{s_{1}}r_{2}^{s_{2}}\right|\int_{t\in\mathbb{R},|t|<M^{-1/2+\delta}}(M^{-1/2+\delta})^{N+1}\left|\left(1+\frac{it}{r_{1}}\right)^{s_{1}}\left(1-\frac{it}{r_{2}}\right)^{s_{2}}\right|dt\right)\nonumber \\
 & =O\left(\left|r_{1}^{s_{1}}r_{2}^{s_{2}}\right|M^{(-1/2+\delta)(N+2)}\right).\label{eq:diff_3}
\end{align}
By (\ref{eq:diff_decom}), (\ref{eq:diff_1}), (\ref{eq:diff_2}),
and (\ref{eq:diff_3}), we have
\begin{align*}
 & \int_{r_{2}+i\infty}^{r_{2}-i\infty}\left(\frac{1}{e^{-2\pi iz}-1}-\sum_{j=0}^{N}a_{j}(z-r_{2})^{j}\right)z^{s_{2}-1}(1-z)^{s_{1}-1}dz\\
 & =O\left(\left|r_{1}^{s_{1}}r_{2}^{s_{2}}\right|M^{(-1/2+\delta)(N+2)}\right)+O\left(\left|r_{1}^{s_{1}}r_{2}^{s_{2}}\right|2^{-cM^{2\delta}/2}\right)+O(M^{-h}),
\end{align*}
where 
\[
c\coloneqq\min\left(\frac{1}{2}\Re\left(\frac{1}{r_{2}}\right)^{2},\frac{1}{2}\Re\left(\frac{1}{r_{1}}\right)^{2}\right),
\]
 and $h$, $\delta$ are any positive real numbers. Thus, by letting
$\delta=\frac{1}{2(N+2)},$ we have
\[
\int_{r_{2}+i\infty}^{r_{2}-i\infty}\left(\frac{1}{e^{-2\pi iz}-1}-\sum_{j=0}^{N}a_{j}(z-r_{2})^{j}\right)z^{s_{2}-1}(1-z)^{s_{1}-1}dz=O\left(\left|r_{1}^{s_{1}}r_{2}^{s_{2}}\right|M^{-(N/2+1/2)}\right).
\]

We have
\begin{align*}
 & \int_{r_{2}+i\infty}^{r_{2}-i\infty}z^{s_{2}-1}(1-z)^{s_{1}-1}\sum_{j=0}^{N}a_{j}(z-r_{2})^{j}dz\\
 & =\int_{r_{2}+i\infty}^{r_{2}-i\infty}z^{s_{2}-1}(1-z)^{s_{1}-1}\sum_{j=0}^{N}a_{j}(zr_{1}-(1-z)r_{2})^{j}dz\\
 & =\sum_{j=0}^{N}a_{j}\sum_{j_{1}+j_{2}=j}{j \choose j_{1}}\int_{r_{2}+i\infty}^{r_{2}-i\infty}z^{s_{2}+j_{2}-1}(1-z)^{s_{1}+j_{1}-1}(-r_{2})^{j_{1}}r_{1}^{j_{2}}dz\\
 & =-2\pi i\sum_{j=0}^{N}a_{j}\sum_{j_{1}+j_{2}=j}{j \choose j_{1}}\frac{\Gamma(1-s_{1}-s_{2}-j)}{\Gamma(1-s_{1}-j_{1})\Gamma(1-s_{2}-j_{2})}(-r_{2})^{j_{1}}r_{1}^{j_{2}}\qquad(\text{by Lemma \ref{lem:Beta_integral}})\\
 & =-2\pi i\sum_{j=0}^{N}a_{j}\frac{\Gamma(1-s_{1}-s_{2})j!}{\Gamma(1-s_{1})\Gamma(1-s_{2})(s_{1}+s_{2})_{j}}\sum_{j_{1}+j_{2}=j}\frac{(s_{1})_{j_{1}}(s_{2})_{j_{2}}}{j_{1}!j_{2}!}(-r_{2})^{j_{1}}r_{1}^{j_{2}}\\
 & =-2\pi i\sum_{j=0}^{N}a_{j}\frac{\Gamma(1-s_{1}-s_{2})j!}{\Gamma(1-s_{1})\Gamma(1-s_{2})(s_{1}+s_{2})_{j}}{\rm Coeff}((1+xr_{2})^{-s_{1}}(1-xr_{1})^{-s_{2}},x^{j})\\
 & =-2\pi i\sum_{j=0}^{N}a_{j}\frac{\Gamma(1-s_{1}-s_{2})j!}{\Gamma(1-s_{1})\Gamma(1-s_{2})(s_{1}+s_{2})_{j}}c_{j}.
\end{align*}
Thus
\begin{align*}
 & \frac{\Gamma(1-s_{1})\Gamma(1-s_{2})}{2\pi\Gamma(1-s_{1}-s_{2})}\cdot\cos\left(\frac{\pi}{2}(s_{1}+s_{2})\right)\int_{p+i\infty}^{p-i\infty}\frac{1}{e^{-2\pi iz}-1}z^{s_{2}}(1-z)^{s_{1}-1}\frac{dz}{z}\\
 & =-i\cos\left(\frac{\pi}{2}(s_{1}+s_{2})\right)\sum_{j=0}^{N}\frac{a_{j}j!}{(s_{1}+s_{2})_{j}}c_{j}+O(M^{-N/2-1/2}).\\
 & =\cos\left(\frac{\pi}{2}(s_{1}+s_{2})\right)\sum_{j=0}^{N}\frac{{\rm Coeff}\left(\frac{-i}{e^{-2\pi i(x+r_{2})}-1},x^{j}\right)j!}{(s_{1}+s_{2})_{j}}c_{j}+O(M^{-N/2-1/2})\\
 & =\cos\left(\frac{\pi}{2}(s_{1}+s_{2})\right)\sum_{j=0}^{N}\frac{{\rm Coeff}\left(\frac{i}{2}+\frac{1}{2}\cot(\pi(x+r_{2})),x^{j}\right)j!}{(s_{1}+s_{2})_{j}}c_{j}+O(M^{-N/2-1/2})\\
 & =\frac{i}{2}\cos\left(\frac{\pi}{2}(s_{1}+s_{2})\right)+\frac{1}{2}\cos\left(\frac{\pi}{2}(s_{1}+s_{2})\right)\sum_{j=0}^{N}\frac{{\rm Coeff}\left(\cot(\pi(x+r_{2})),x^{j}\right)j!}{(s_{1}+s_{2})_{j}}c_{j}+O(M^{-N/2-1/2}).
\end{align*}
Now, by replacing $N$ with $2N+2$, we obtain
\begin{align}
 & \frac{\Gamma(1-s_{1})\Gamma(1-s_{2})}{2\pi\Gamma(1-s_{1}-s_{2})}\cdot\cos\left(\frac{\pi}{2}(s_{1}+s_{2})\right)\int_{p+i\infty}^{p-i\infty}\frac{1}{e^{-2\pi iz}-1}z^{s_{2}}(1-z)^{s_{1}-1}\frac{dz}{z}\nonumber \\
 & =\frac{i}{2}\cos\left(\frac{\pi}{2}(s_{1}+s_{2})\right)+\frac{1}{2}\cos\left(\frac{\pi}{2}(s_{1}+s_{2})\right)\sum_{j=0}^{2N+2}\frac{{\rm Coeff}\left(\cot(\pi(x+r_{2})),x^{j}\right)j!}{(s_{1}+s_{2})_{j}}c_{j}+O(M^{-N-3/2})\nonumber \\
 & =\frac{i}{2}\cos\left(\frac{\pi}{2}(s_{1}+s_{2})\right)+\frac{1}{2}\cos\left(\frac{\pi}{2}(s_{1}+s_{2})\right)\sum_{j=0}^{2N}\frac{{\rm Coeff}\left(\cot(\pi(x+r_{2})),x^{j}\right)j!}{(s_{1}+s_{2})_{j}}c_{j}+O(M^{-N-1}).\label{eq:term_6th}
\end{align}
Thus, by (\ref{eq:e2}), (\ref{eq:term_1st}), (\ref{eq:term_2nd}),
(\ref{eq:term_3rd}), (\ref{eq:term_4th}), (\ref{eq:term_5th}),
and (\ref{eq:term_6th}), we have
\begin{align*}
\frac{\zeta(s_{1},s_{2})}{f(s_{1}+s_{2})} & =\frac{\sin(\pi s_{2})\Gamma(1-s_{1})\Gamma(1-s_{2})\zeta(2-s_{1}-s_{2})}{4\pi\Gamma(1-s_{1}-s_{2})\sin(\frac{\pi}{2}(s_{1}+s_{2}))}\\
 & \quad-\frac{i^{1-s_{1}-s_{2}}}{2}+\frac{i}{2}\cos\left(\frac{\pi}{2}(s_{1}+s_{2})\right)\\
 & \quad+\frac{1}{2}\cos\left(\frac{\pi}{2}(s_{1}+s_{2})\right)\sum_{j=0}^{2N}\frac{{\rm Coeff}\left(\cot(\pi(x+r_{2})),x^{j}\right)j!}{(s_{1}+s_{2})_{j}}c_{j}+O(M^{-N-1})\\
 & =\frac{\sin(\pi s_{2})\Gamma(1-s_{1})\Gamma(1-s_{2})\zeta(2-s_{1}-s_{2})}{4\pi\Gamma(1-s_{1}-s_{2})\sin(\frac{\pi}{2}(s_{1}+s_{2}))}\\
 & \quad-\frac{1}{2}\sin\left(\frac{\pi}{2}(s_{1}+s_{2})\right)\\
 & \quad+\frac{1}{2}\cos\left(\frac{\pi}{2}(s_{1}+s_{2})\right)\sum_{j=0}^{2N}\frac{\pi^{j}\cot^{(j)}(\pi r_{2})}{(s_{1}+s_{2})_{j}}c_{j}+O(M^{-N-1}).
\end{align*}
Finally, the explicit formula for $c_{j}$ follows from
\begin{align*}
(1+xr_{2})^{-s_{1}}(1-xr_{1})^{-s_{2}} & =\exp(-s_{1}\log(1+xr_{2})-s_{2}\log(1-xr_{1}))\\
 & =\exp\left(s_{1}\sum_{k=1}^{\infty}\frac{(-xr_{2})^{k}}{k}+s_{2}\sum_{k=1}^{\infty}\frac{(xr_{1})^{k}}{k}\right)\\
 & =\exp\left((s_{1}+s_{2})\sum_{k=2}^{\infty}\frac{r_{1}(-r_{2})^{k}+r_{2}r_{1}^{k}}{k}x^{k}\right)\\
 & =\prod_{k=2}^{\infty}\sum_{l=0}^{\infty}\frac{(s_{1}+s_{2})^{l}}{l!}\left(\frac{r_{1}(-r_{2})^{k}+r_{2}r_{1}^{k}}{k}\right)^{l}x^{kl}.\qedhere
\end{align*}
\end{proof}
Now Theorem \ref{thm:intro_main} follows from Theorem \ref{thm:main}
because the additional condition
\[
\min\{\left|s_{1}+s_{2}-2k\right|:k\in\mathbb{Z}\}>\frac{1}{|s_{1}+s_{2}|^{1+1/\epsilon}}
\]
in Theorem \ref{thm:intro_main} implies that
\[
\frac{\sin(\pi s_{2})\Gamma(1-s_{1})\Gamma(1-s_{2})\zeta(2-s_{1}-s_{2})}{4\pi\Gamma(1-s_{1}-s_{2})\sin(\frac{\pi}{2}(s_{1}+s_{2}))}=O\left(\left|s_{1}+s_{2}\right|^{-h}\right)
\]
for any $h>0$.

\subsection*{Acknowledgements}

This work was supported by JSPS KAKENHI Grant Numbers JP18K13392,
JP19K14511, JP22K03244, and JP22K13897. This work was also supported
by ``Grant for Basic Science Research Projects from The Sumitomo Foundation''
and by ``Research Funding Granted by The University of Kitakyushu''.

\end{document}